
\RequirePackage{fix-cm}
\documentclass[smallextended]{svjour3} 
\smartqed 
\usepackage{mathptmx} 
\usepackage{graphicx}
\usepackage[T1]{fontenc}
\usepackage{amsfonts}
\usepackage{amsmath, amssymb}

\newtheorem{prop}{Proposition}
\newtheorem{thrm}[prop]{Theorem}
\newtheorem{defn}[prop]{Definition}
\newtheorem{cor}[prop]{Corollary}
\newtheorem{lemma1}[prop]{Lemma}

\newcommand{\rd}{\mathrm d}

\newcommand{\ts}{TS^p_\alpha}

\newcommand{\cond}{\stackrel{d}{\rightarrow}}
\newcommand{\conv}{\stackrel{v}{\rightarrow}}
\newcommand{\conw}{\stackrel{w}{\rightarrow}}

\newcommand{\lgg}{\mathrm{log}}

\journalname{Journal of Theoretical Probability}

\begin{document}

\title{Inversions of L\'evy Measures and the Relation Between Long and Short Time Behavior of L\'evy Processes}
\titlerunning{Inversions of L\'evy Measures}   
\author{Michael Grabchak}

\date{Received: date / Accepted: date}

\institute{M. Grabchak\at
              The University of North Carolina at Charlotte\\
9201 University City Blvd, Charlotte, NC 28223-0001\\
              \email{mgrabcha@uncc.edu}
}

\maketitle

\begin{abstract}
The inversion of a L\'evy measure was first introduced (under a different name) in \cite{Sato:2007}.  We generalize the definition and give some properties.  We then use inversions to derive a relationship between weak convergence of a L\'evy process to an infinite variance stable distribution when time approaches zero and weak convergence of a different L\'evy process as time approaches infinity.  This allows us to get self contained conditions for a L\'evy process to converge to an infinite variance stable distribution as time approaches zero.  We formulate our results both for general L\'evy processes and for the important class of tempered stable L\'evy processes.  For this latter class, we give detailed results in terms of their Rosi\'nski measures.

\keywords{Inversions of L\'evy Measures \and Tempered Stable Distributions \and Long and Short Time Behavior \and L\'evy Processes}
\subclass{60G51 \and 60F05  \and  60E07 }
\end{abstract}

\section{Introduction}

Let $\{X_t:t\ge0\}$ be a $d$-dimensional L\'evy process. The long (short) time behavior of the process is the weak limit of $X_t$, under appropriate shifting and scaling, as $t$ approaches infinity (zero). An alternate, but equivalent, definition, in terms of weak convergence of certain time rescaled processes, is also sometimes used (see e.g.\ \cite{Rosinski:2007}). Since L\'evy processes are a generalization of sums of iid random variables, it is not difficult to see that the long time behavior of the process corresponds to the stable distribution to whose domain of attraction $X_1$ belongs. Necessary and sufficient conditions for this are given in \cite{Rvaceva:1962} and \cite{Meerschaert:Scheffler:2001}. On the other hand, short time behavior has no simple analogue with the summation of iid random variables. Never-the-less, in certain situations, one can construct another L\'evy process such that the asymptotic behavior of the new process as time approaches infinity determines the asymptotic behavior of the original process as time approaches zero.

To construct such a process, observe that the long time behavior of a L\'evy process is governed by the tails of its L\'evy measure. In a similar way, we will see that its short time behavior is governed by the behavior of its L\'evy measure near zero. Intuitively, this means that the new process should have a L\'evy measure, which inverts the original L\'evy measure turning its behavior near zero to behavior near infinity and its behavior near infinity to behavior near zero.

A transformation of this type was introduced in \cite{Sato:2007} in the context of studying integrals with respect to L\'evy processes.  There, for any infinitely divisible distribution $\mu$ with no Gaussian part, the dual distribution of $\mu$ was defined.  This was renamed the inversion of $\mu$ in \cite{Sato:2012} and \cite{Sato:Ueda:2012}. We will refer to the L\'evy measure of the inversion of $\mu$ as the $0$-inversion of the L\'evy measure of $\mu$. We will then generalize this to what we term the $\beta$-inversion of the L\'evy measure of $\mu$, where $\beta\in[0,2]$.

Inversions of infinitely divisible distributions were used  in \cite{Sato:Ueda:2012} to derive asymptotic results for L\'evy processes.  Specifically, it was shown that if $\{X_t:t\ge0\}$ and $\{X_t':t\ge0\}$ are L\'evy processes such that the distribution of $X_1'$ is the inversion of the distribution of $X_1$ then short time convergence of $\{X_t:t\ge0\}$ to a point mass corresponds to long time convergence of $\{X_t':t\ge0\}$ to a point mass. In other words, \cite{Sato:Ueda:2012} uses inversion to show a relationship between the long and short time weak laws of large numbers. In this paper, we will use it to show a relationship between the long and short time central limit theorem for convergence to an infinite variance stable distribution.

There has been particular interest in the study of long and short time behavior in the class of tempered stable L\'evy processes. Tempered stable distributions were introduced in \cite{Rosinski:2007} as a class of models that (under certain conditions) look like infinite variance stable distributions in some central region, but they have lighter tails. This makes them particularly attractive for a variety of applications, including mathematical finance, physics, computer science, and biostatistics (see the references in \cite{Grabchak:2012a}).  An explanation of why such models appear in applications is given in \cite{Grabchak:Samorodnitsky:2010}. Sufficient conditions for the long time behavior of tempered stable L\'evy processes to be Gaussian and for the short time behavior to be the stable distribution that is being tempered are given in \cite{Rosinski:2007}, and (for certain extensions of these models) in \cite{Rosinski:Sinclair:2010} and \cite{Bianchi:Rachev:Kim:Fabozzi:2011}. 

We will be concerned with the more general class of $p$-tempered $\alpha$-stable distributions introduced in \cite{Grabchak:2012a}. The L\'evy measure of a $p$-tempered $\alpha$-stable distribution can be parametrized in terms of its so called Rosi\'nski measure. It is often easier to work with the Rosi\'nski measure than to work with the L\'evy measure directly.  For this reason all of our results for $p$-tempered $\alpha$-stable distributions are given in terms of their Rosi\'nski measures.  In fact, it is the particular structure of Rosi\'nski measures that motivates the extension of inversions to $\beta$-inversions.

In the next section we introduce our notation and give some background. In Section \ref{sec: Duals}, we define $\beta$-inversions and give some of their properties. Then, in Sections \ref{sec: sequences} and \ref{sec: long and short} we present, in parallel, convergence results for distributions in $ID_0$ and those in $\ts$. Specifically, in Section \ref{sec: sequences}, we relate convergence of a sequence of distributions in $ID_0$ ($\ts$) with the convergence of a sequence of distributions whose L\'evy  (Rosi\'nski) measures are $\beta$-inversions of the L\'evy (Rosi\'nski) measures of the original sequence. Finally, in Section \ref{sec: long and short}, we use $\beta$-inversions to derive necessary and sufficient conditions for L\'evy processes and $p$-tempered $\alpha$-stable L\'evy processes to converge to infinite variance stable distributions as time approaches zero.

\section{Preliminaries}

Let $\mathbb R^d$ be $d$-dimensional Euclidean space, let $|\cdot|$ be the usual norm on $\mathbb R^d$, let $\mathbb R^d_0=\mathbb R^d\setminus\{0\}$, and let $\mathbb S^{d-1}=\{x\in\mathbb R^d: |x|=1\}$. Let $\mathfrak B(\mathbb R^d)$ denote the Borel sets on $\mathbb R^d$ and let $\mathfrak B(\mathbb S^{d-1})$ denote the Borel sets on $\mathbb S^{d-1}$.  We will write $X\sim\mu$ to denote that $X$ is a random variable on $\mathbb R^d$ with distribution $\mu$. If $f$ and $g$ are real-valued functions, $c\in\mathbb R$, and $a\in\{0,\infty\}$, we write $f(t)\sim cg(t)$ as $t\to a$ to denote $f(t)/g(t)\to c$ as $t\to a$.  If $\rho\in\mathbb R$, $a\in\{0,\infty\}$, and $f$ is regularly varying at $a$ with index $\rho$, that is if for any $x>0$ $\lim_{t\to a}f(tx)/f(t)=x^\rho$, we write $f\in RV^a_\rho$ (for details about regular variation see \cite{Bingham:Goldie:Teugels:1987}). For $\beta\in[0,2]$, let $\mathfrak M^\beta$ be the class of Borel measures on $\mathbb R^d$ such that $M\in\mathfrak M^\beta$ if and only if
\begin{eqnarray}
M(\{0\})=0 \mbox{ and } \int_{\mathbb R^d}\left(|x|^2\wedge |x|^\beta\right)M(\rd x)<\infty.
\end{eqnarray}
Note that if $0<\beta_1<\beta_2<2$ then $\mathfrak M^{2}\subsetneq\mathfrak M^{\beta_2}\subsetneq\mathfrak M^{\beta_1}\subsetneq \mathfrak M^0$. The class $\mathfrak M^0$ is the class of all L\'evy measures. If $M_0,M_1,M_2,\dots\in\mathfrak M^0$, we write $M_n\conv M_0$ on $\mathbb R^d_0$ to mean that for any bounded, continuous Borel function $f:\mathbb R^d\mapsto\mathbb R$, which vanishes on a neighborhood of zero and on a neighborhood of infinity, we have $\int_{\mathbb R^d}f(x)M_n(\rd x)\to\int_{\mathbb R^d}f(x)M_0(\rd x)$ as $n\to\infty$.

Recall that the characteristic function of an infinitely divisible distribution $\mu$ can be written as $\hat\mu(z) = \exp\{C_{\mu}(z)\}$ where
\begin{eqnarray}\label{eq: inf div char func}
C_{\mu}(z) = -\frac{1}{2}\langle z,Az\rangle + i\langle b,z\rangle + \int_{\mathbb R^d}\left(e^{i\langle z,x\rangle}-1-i\frac{\langle z,x\rangle}{1+|x|^2}\right)M(\rd x),
\end{eqnarray}
$A$ is a symmetric nonnegative-definite $d\times d$ matrix, $b\in\mathbb R^d$, and $M\in\mathfrak M^0$. The measure $\mu$ is uniquely identified by the L\'evy triplet $(A,M,b)$ and we write $\mu=ID(A,M,b)$. If $A=0$ we write $\mu=ID_0(M,b)$.  Let $ID_0$ be the class of all infinitely divisible distributions with L\'evy triplets of the form $(0,M,b)$. It is well known that this class is not closed under weak convergence. However, we can characterize weak convergence within $ID_0$ by the following specialization of Theorem 8.7 in \cite{Sato:1999}.

\begin{lemma1} \label{lemma: conv ID}
If $\mu_n=ID_0(M_n,b_n)$ for $n=0,1,2\dots$ then $\mu_n\conw\mu_0$ if and only if $M_n \conv M_0$ on $\mathbb R^d_0$, $b_n\rightarrow b_0$, 
\begin{eqnarray*}\label{eq: gaussian comp}
\lim_{\epsilon\downarrow0}\limsup_{n\rightarrow\infty} \int_{|x|<\epsilon}|x|^2 M_n(\rd x)=0, \mbox{ and }
\lim_{N\rightarrow\infty}\limsup_{n\rightarrow\infty} \int_{|x|>N} M_n(\rd x)=0.
\end{eqnarray*}
\end{lemma1}

An important subclass of $ID_0$ is the class of $p$-tempered $\alpha$-stable distributions introduced in \cite{Grabchak:2012a}. This is an extension of the tempered stable distributions of \cite{Rosinski:2007} and \cite{Bianchi:Rachev:Kim:Fabozzi:2011}. If we allow these distributions to have a Gaussian part then we would have the class $J_{\alpha,p}$ defined in \cite{Maejima:Nakahara:2009}. For the remainder of this paper, fix $p>0$, $\alpha\in(-\infty,2)\setminus\{0\}$, and define $\gamma=\alpha\vee0$. A distribution $\mu=ID_0(M,b)$ is called $p$-tempered $\alpha$-stable, and is said to belong to class $\ts$, if
\begin{eqnarray}
M(A) = \int_{\mathbb S^{d-1}}\int_0^\infty 1_A(ru)q(r^p,u)r^{-1-\alpha}\rd r \sigma(\rd u), & A\in\mathfrak B(\mathbb R^d),
\end{eqnarray}
where $\sigma$ is a finite Borel measure on $\mathbb S^{d-1}$ and $q:(0,\infty)\times\mathbb S^{d-1}\mapsto(0,\infty)$ is a Borel function such that, for all $u\in\mathbb S^{d-1}$,    $q(\cdot,u)$ is completely monotone and $\lim_{r\to\infty}q(r,u)=0$. In \cite{Grabchak:2012a} it is shown that we can write
\begin{eqnarray}\label{eq:levy m}
M(A) = \int_{\mathbb{R}^d}\int_0^\infty 1_A(tx)t^{-1-\alpha}e^{-t^p}\rd t R(\rd x), & A\in\mathfrak{B}(\mathbb R^d)
\end{eqnarray}
for some measure $R\in \mathfrak M^\gamma$. Moreover, for fixed $p>0$ and $\alpha<2$, $R$ and $M$ uniquely determine each other.  We call $R$ the Rosi\'nski measure of $\mu$, and  we write $\mu=\ts(R,b)$. 

In \cite{Grabchak:2012a} and \cite{Maejima:Nakahara:2009}, the class $\ts$ is also defined when $\alpha=0$. However, the conditions on the Rosi\'nski measure are somewhat more complicated and we do not consider this case here. Never-the-less, some (but not all) of our results can be extended to this case, see Remark \ref{remark: case alpha=0} below.

A probabilistic interpretation of $R$ is given in \cite{Maejima:Nakahara:2009} (see also \cite{Jurek:2007} and Section 4 in \cite{Sato:2012}).  There it is shown that if $\mu= \ts(R,b)$ then under mild conditions (these always hold for $\alpha<1$) $\mu$ is the distribution of
$$
\int_0^{c_{\alpha,p}} G^*_{\alpha,p}(t) \mathrm dX_t
$$ 
where $G^*_{\alpha,p}(t)$ is the inverse function of $G_{\alpha,p}(u)=\int_u^\infty x^{-1-\alpha}e^{-x^p}\rd x$, $c_{\alpha,p}=G_{\alpha,p}(0)$, and $\{X_t:t\ge0\}$ is a L\'evy process such that the distribution of $X_1$ has L\'evy measure $R$. 

As with the class $ID_0$, the class $\ts$ is not closed under weak convergence. The smallest class that contains $\ts$ and is closed under weak convergence is characterized in \cite{Grabchak:2012b}.  The following is a specialization of a result from that paper.

\begin{lemma1}\label{lemma: conv TS}
If $\mu_n = TS^p_\alpha(R_n,b_n)$ for $n=0,1,2\dots$ then $\mu_n\conw\mu_0$ if and only if $R_n\conv R_0$ on $\mathbb R^d_0$, $b_n\rightarrow b_0$, 
\begin{eqnarray*}\label{eq: gaussian comp TS}
\lim_{\epsilon\downarrow0}\limsup_{n\rightarrow\infty} \int_{|x|<\epsilon}|x|^2 R_n(\rd x)=0, \mbox{ and }
\lim_{N\rightarrow\infty}\limsup_{n\rightarrow\infty} \int_{|x|>N}|x|^\gamma R_n(\rd x)=0.
\end{eqnarray*}
\end{lemma1}

\section{Inversions of L\'evy Measures}\label{sec: Duals}

\begin{defn}
Fix $\beta\in[0,2]$. If $M\in \mathfrak M^\beta$ we call the measure $M^\beta$ the $\beta$-inversion of $M$    
if $M^\beta(\{0\})=0$ and
\begin{eqnarray*}
M^\beta(A) = \int_{\mathbb R^d}1_A\left(\frac{x}{|x|^2}\right)|x|^{2+\beta}M(\rd x), \ \ \ A\in\mathfrak B(\mathbb R^d).
\end{eqnarray*}
\end{defn}

In \cite{Sato:2007}, the dual of an infinitely divisible distribution $\mu=ID_0(M,b)$ was defined to be the distribution $ID_0(M^0,-b)$.  Later, in \cite{Sato:2012} and \cite{Sato:Ueda:2012}, this was renamed the inversion of $\mu$. Thus the $0$-inversion of $M$ is the L\'evy measure of the inversion of $\mu$.  

It is straightforward to see that for $M\in\mathfrak M^\beta$ we have $M^\beta\in \mathfrak M^\beta$,
\begin{eqnarray}\label{eq: dual of dual for ID}
(M^\beta)^\beta = M,
\end{eqnarray}
and
\begin{eqnarray}
\int_{|x|>1}|x|^2 M(\rd x)<\infty &\Longleftrightarrow& \int_{|x|\le1}|x|^\beta M^\beta(\rd x)<\infty.
\end{eqnarray}
We now relate the convergence of a sequence of measures in $\mathfrak M^\beta$ to the convergence of the sequence of their $\beta$-inversions.

\begin{prop}\label{prop: conv of M iff conv of M*}
Fix $\beta\in[0,2]$, and let $M_0,M_1,M_2,\dots\in \mathfrak M^\beta$.\\
1. $M_n\conv M_0$ on $\mathbb R^d_0$ if and only if $M^\beta_n\conv  M^\beta_0$ on $\mathbb R^d_0$.\\
2. We have
\begin{eqnarray*}
\lim_{\epsilon\downarrow0}\limsup_{n\rightarrow\infty}\int_{|x|<\epsilon}|x|^2 M_n(\rd x)=0
\end{eqnarray*}
if and only if
\begin{eqnarray*}
\lim_{N\rightarrow\infty}\limsup_{n\rightarrow\infty}\int_{|x|>N }|x|^\beta M^\beta_n(\rd x)=0.
\end{eqnarray*}
\end{prop}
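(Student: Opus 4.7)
The plan is to exploit the fact that the map $T:\mathbb{R}^d_0\to\mathbb{R}^d_0$ defined by $T(x)=x/|x|^2$ is a homeomorphism, is its own inverse, and satisfies $|T(x)|=1/|x|$. Thus $T$ interchanges neighborhoods of $0$ and neighborhoods of $\infty$ on $\mathbb{R}^d_0$. By the definition of $\beta$-inversion, for any Borel function $f:\mathbb{R}^d\to\mathbb{R}$ vanishing at $0$,
\begin{eqnarray*}
\int_{\mathbb{R}^d}f(y)M^\beta(\rd y)=\int_{\mathbb{R}^d}f(T(x))|x|^{2+\beta}M(\rd x),
\end{eqnarray*}
whenever either side makes sense. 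Combined with $(M^\beta)^\beta=M$ from \eqref{eq: dual of dual for ID}, this will reduce both parts to direct computations.

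For Part 1, take any bounded continuous $f:\mathbb{R}^d\to\mathbb{R}$ that vanishes on neighborhoods of both $0$ and $\infty$, and define $g(x)=f(T(x))|x|^{2+\beta}$ for $x\neq0$ and $g(0)=0$. Since $T$ is continuous on $\mathbb{R}^d_0$, and since $|T(x)|=1/|x|$ is small exactly when $|x|$ is large (and vice versa), the function $g$ is continuous, bounded, and vanishes on neighborhoods of $0$ and of $\infty$. The displayed identity gives $\int f\,\rd M_n^\beta=\int g\,\rd M_n$, so $M_n\conv M_0$ on $\mathbb{R}^d_0$ implies $M_n^\beta\conv M_0^\beta$ on $\mathbb{R}^d_0$. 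The reverse implication is obtained by applying the same argument to the sequence $\{M_n^\beta\}$ and using $(M^\beta)^\beta=M$.

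For Part 2, I would apply the change of variable identity with $f(y)=|y|^\beta 1_{\{|y|>N\}}$, which vanishes at $0$. Since $|T(x)|^\beta=|x|^{-\beta}$ and $\{|T(x)|>N\}=\{|x|<1/N\}$,
\begin{eqnarray*}
\int_{|y|>N}|y|^\beta M_n^\beta(\rd y)=\int_{|x|<1/N}|x|^{-\beta}\,|x|^{2+\beta}M_n(\rd x)=\int_{|x|<1/N}|x|^2M_n(\rd x).
\end{eqnarray*}
Setting $\epsilon=1/N$ makes the two double limits literally identical, so the equivalence is immediate.

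The one point that needs a little care, and is really the only substantive step, is the verification in Part 1 that $g(x)=f(T(x))|x|^{2+\beta}$ is bounded and vanishes near $0$ and near $\infty$, since naively the factor $|x|^{2+\beta}$ blows up at infinity. The reason it causes no trouble is that the support condition on $f$ forces $f(T(x))=0$ whenever $|x|$ exceeds some finite threshold, so the $|x|^{2+\beta}$ factor is active only on a bounded set. Once this observation is in place, everything else is bookkeeping with the definition.
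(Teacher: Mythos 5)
Your proposal is correct and follows essentially the same route as the paper: the identity $\int f\,\rd M^\beta=\int f(x/|x|^2)|x|^{2+\beta}M(\rd x)$ with the observation that $g(x)=f(x/|x|^2)|x|^{2+\beta}$ is again an admissible test function, plus $(M^\beta)^\beta=M$ for the converse; your explicit computation for Part 2 is exactly what the paper dismisses as immediate from the definition. No gaps.
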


\begin{proof}
The second part follows immediately from the definition of $\beta$-inversions. To show the first part let $f:\mathbb R^d\mapsto\mathbb R$ be a bounded continuous function vanishing on a neighborhood of zero and on a neighborhood of infinity. The function $g(x) = f\left(\frac{x}{|x|^2}\right)|x|^{2+\beta}$ is also a continuous and bounded mapping of $\mathbb R^d$ into $\mathbb R$, vanishing on a neighborhood of zero and on a neighborhood of infinity. Thus, if $M^\beta_n\conv  M_0^\beta$ on $\mathbb R^d_0$ then
\begin{eqnarray*}
\lim_{n\rightarrow\infty}\int_{\mathbb R^d}f(x)M_n(\rd x) &=& \lim_{n\rightarrow\infty}\int_{\mathbb R^d}g(x)M^\beta_n(\rd x)\\
&=& \int_{\mathbb R^d}g(x)M_0^\beta(\rd x) = \int_{\mathbb R^d}f(x)M_0(\rd x),
\end{eqnarray*}
and $M_n\conv  M_0$ on $\mathbb R^d_0$. The other direction follows by \eqref{eq: dual of dual for ID}. \qed
\end{proof}

Our next result relates the regular variation of a L\'evy measure to the regular variation of its $\beta$-inversion. Before giving our results, we define regularly varying L\'evy measures. More information on regularly varying measures can be found in e.g.\ \cite{Meerschaert:Scheffler:2001}, \cite{Hult:Lindskog:2006}, or \cite{Resnick:2007}. However, our  formulation is somewhat different from those. 

\begin{defn}\label{defn: reg var for measures at infty}
Fix $\rho\le0$, $a\in\{0,\infty\}$, and $M\in\mathfrak M^0$ such that $M\ne0$. If $a=\infty$ assume further that $M$ has an unbounded support. $M$ is said to be regularly varying at $a$ with index $\rho$ if there is a finite, non-zero Borel measure $\sigma$ on $\mathbb S^{d-1}$ such that for any $t>0$ and any $D\in\mathfrak B(\mathbb S^{d-1})$ with $\sigma(\partial D)=0$
$$
\lim_{r\rightarrow a}\frac{M\left(|x|>r t, \frac{x}{|x|}\in D\right)}{M(|x|>r)} = t^{\rho}\frac{\sigma(D)}{\sigma(\mathbb S^{d-1})}.
$$
When this holds we write $M\in RV^a_\rho(\sigma)$.
\end{defn}

Note that, in the above, $\sigma$ is not unique.  It is only defined up to multiplication by a positive constant.

\begin{prop}\label{prop: long and short reg var}
Fix $\beta\in[0,2]$, $M\in\mathfrak M^\beta$, and $\rho\in(-2-\beta,0)$. If $\sigma\ne0$ is a finite Borel measure on $\mathbb S^{d-1}$  then
\begin{eqnarray}\label{eq: equiv of reg var for duals}
M^\beta \in RV^\infty_{\rho}(\sigma) \Longleftrightarrow M\in RV^0_{-(\rho+2+\beta)}(\sigma).
\end{eqnarray}
Moreover if $\ell\in RV^\infty_0$ then
\begin{eqnarray}\label{eq: M rv assymp}
M(|x|>t, x/|x|\in D) \sim \sigma(D)t^{-\rho-2-\beta}\ell(1/t) \ \mbox{as} \ t\downarrow0
\end{eqnarray}
for all $D\in\mathfrak B(\mathbb S^{d-1})$ with $\sigma(\partial D)=0$ if and only if
\begin{eqnarray}\label{eq: M* rv assymp}
M^\beta(|x|>t, x/|x|\in D) \sim \frac{\rho+2+\beta}{|\rho|}\sigma(D)t^{\rho}\ell(t) \ \mathrm{as} \ t\rightarrow\infty
\end{eqnarray}
for all $D\in\mathfrak B(\mathbb S^{d-1})$ with $\sigma(\partial D)=0$.
\end{prop}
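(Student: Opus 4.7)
The plan is to convert the tail mass of $M^\beta$ into a truncated $(2+\beta)$-moment integral of $M$ near the origin and then apply Karamata's theorem. Via the substitution $y=x/|x|^2$ in the definition of $M^\beta$ (which sends $|y|>t$ to $|x|<1/t$ while preserving $y/|y|=x/|x|$), one obtains
\begin{eqnarray*}
M^\beta(|y|>t,\, y/|y|\in D) = \int_{0<|x|<1/t,\, x/|x|\in D} |x|^{2+\beta} M(\rd x);
\end{eqnarray*}
writing $|x|^{2+\beta}=(2+\beta)\int_0^{|x|}v^{1+\beta}\rd v$ and applying Fubini yields the fundamental identity
\begin{eqnarray*}
M^\beta(|y|>t,\, y/|y|\in D) = (2+\beta)\int_0^{1/t} v^{1+\beta}\bar{\mu}_D(v)\,\rd v - (1/t)^{2+\beta}\bar{\mu}_D(1/t),
\end{eqnarray*}
where $\bar{\mu}_D(v):=M(|x|>v,\, x/|x|\in D)$.

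To prove \eqref{eq: M rv assymp} $\Rightarrow$ \eqref{eq: M* rv assymp}, the hypothesis makes $v^{1+\beta}\bar{\mu}_D(v)$ regularly varying at zero with index $-1-\rho>-1$ (using $\rho<0$). Karamata's direct theorem then gives $\int_0^{1/t}v^{1+\beta}\bar{\mu}_D(v)\,\rd v\sim (-\rho)^{-1}(1/t)^{2+\beta}\bar{\mu}_D(1/t)$ as $t\to\infty$, so subtracting the boundary term produces the factor $(2+\beta)/(-\rho)-1=(\rho+2+\beta)/|\rho|$, and substituting $\bar{\mu}_D(1/t)\sim\sigma(D)t^{\rho+2+\beta}\ell(t)$ yields the claimed asymptotic.

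For the reverse implication I would exploit the involution $(M^\beta)^\beta=M$ from \eqref{eq: dual of dual for ID}, applying the fundamental identity with $M^\beta$ in place of $M$, so that
\begin{eqnarray*}
M(|y|>t,\, y/|y|\in D) = (2+\beta)\int_0^{1/t} v^{1+\beta}\bar{\mu}^\beta_D(v)\,\rd v - (1/t)^{2+\beta}\bar{\mu}^\beta_D(1/t),
\end{eqnarray*}
where $\bar{\mu}^\beta_D(v):=M^\beta(|x|>v,\, x/|x|\in D)$. Now \eqref{eq: M* rv assymp} makes $v^{1+\beta}\bar{\mu}^\beta_D(v)$ regularly varying at infinity with index $1+\beta+\rho>-1$ (using $\rho>-2-\beta$), so Karamata's direct theorem at infinity applies and, combined with the boundary term, produces \eqref{eq: M rv assymp} after elementary simplification of constants. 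The ratio-form equivalence \eqref{eq: equiv of reg var for duals} would follow from the same computation without identifying an explicit $\ell$: the hypothesis $M\in RV^0_{-(\rho+2+\beta)}(\sigma)$ provides $\bar{\mu}_D(v)\sim(\sigma(D)/\sigma(\mathbb{S}^{d-1}))\bar{\mu}(v)$ with $\bar{\mu}\in RV^0_{-(\rho+2+\beta)}$ scalarly, and taking the ratio of the resulting expressions for $M^\beta(|y|>rt,\, y/|y|\in D)$ and $M^\beta(|y|>r)$ produces the required $t^\rho\sigma(D)/\sigma(\mathbb{S}^{d-1})$ limit at infinity.

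The main obstacle I anticipate is the careful bookkeeping of constants in the two Karamata steps and verifying that slowly varying factors and angular measures transfer consistently between the two scales; the index restriction $\rho\in(-2-\beta,0)$ is precisely what guarantees that the integrands have indices strictly greater than $-1$ at the relevant endpoint in both applications of Karamata, so no Tauberian or monotone density argument is required.
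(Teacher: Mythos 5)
Your overall strategy coincides with the paper's: identify $M^\beta(|y|>t,\,y/|y|\in D)$ with the truncated $(2+\beta)$-moment of $M$ over $\{|x|<1/t\}$ and pass between tails and truncated moments by a Karamata-type argument. The paper outsources exactly that step to Theorem 5.3.11 of Meerschaert and Scheffler (equivalently Feller, VIII.9, Theorem 2) rather than rederiving it via Fubini as you do, but the content is the same, and your constant bookkeeping is correct: $\tfrac{2+\beta}{|\rho|}-1=\tfrac{\rho+2+\beta}{|\rho|}$, and the restrictions $\rho<0$ and $\rho>-2-\beta$ enter precisely where you say they do. Deducing \eqref{eq: equiv of reg var for duals} from the equivalence of \eqref{eq: M rv assymp} and \eqref{eq: M* rv assymp} is also how the paper proceeds.

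There is, however, a genuine gap: the case $\sigma(D)=0$. For such $D$, hypothesis \eqref{eq: M rv assymp} says only that $\bar\mu_D(v)=o\bigl(v^{-\rho-2-\beta}\ell(1/v)\bigr)$ as $v\downarrow0$; it does \emph{not} make $v^{1+\beta}\bar\mu_D(v)$ regularly varying (the function could vanish identically or oscillate arbitrarily beneath the $o(\cdot)$ bound), so Karamata's direct theorem does not apply and your first step fails. Since the proposition quantifies over all $D$ with $\sigma(\partial D)=0$, which includes $\sigma$-null sets, this case cannot be skipped; the paper devotes roughly half of its proof to it, replacing $M_D$ by $M_D+\epsilon M$ to restore a nonzero regularly varying majorant and then letting $\epsilon\downarrow0$. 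Your identity can be repaired in the same spirit (bound $\int_0^{1/t}v^{1+\beta}\bar\mu_D(v)\,\rd v$ by comparison with the convergent integral of the dominating regularly varying function), but some such argument must be supplied, and the same omission recurs in your reverse implication. A very minor further point: your ``fundamental identity'' ignores a possible atom of $M$ on the sphere $|x|=1/t$; this is harmless for the asymptotics but deserves a sentence.
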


\begin{proof}
Note that the equivalence between \eqref{eq: M rv assymp} and \eqref{eq: M* rv assymp} implies \eqref{eq: equiv of reg var for duals}, thus we just need to show that \eqref{eq: M rv assymp} holds if and only if \eqref{eq: M* rv assymp} holds. For every $D\in\mathfrak B(\mathbb S^{d-1})$ with $\sigma(\partial D)=0$, let
$$
V_D(t) = M\left(|x|>1/t,\frac{x}{|x|}\in D\right)\ \mbox{and}\ V^\beta_D(t) = M^\beta\left(|x|>t,\frac{x}{|x|}\in D\right).
$$
Assume that \eqref{eq: M rv assymp} holds for all $D\in\mathfrak B(\mathbb S^{d-1})$ with $\sigma(\partial D)=0$. This means that
$$
V_D(t)\sim \sigma(D)t^{\rho+2+\beta}\ell(t) \ \ \mathrm{as} \ t\rightarrow\infty.
$$
If $\sigma(D)>0$ then $V_D\in RV_{\rho+2+\beta}^\infty$, and since
$$
V_D(t) = \int_{\left[|x|<t,\frac{x}{|x|}\in D\right]}|x|^{2+\beta} M^\beta(\rd x) \mbox{ and } V^\beta_D(t) = \int_{\left[|x|>t,\frac{x}{|x|}\in D\right]} M^\beta (\rd x),
$$
Theorem 5.3.11 in \cite{Meerschaert:Scheffler:2001}  (or Theorem 2 in Section VIII.9 of \cite{Feller:1971}) implies that
$$
V^\beta_D(t) \sim t^{-2-\beta} V_D(t)\frac{2+\beta+\rho}{|\rho|} \ \mbox{as} \ t\rightarrow\infty.
$$
If $\sigma(D)=0$, fix $\epsilon>0$, let $M_\epsilon(\rd x) = \epsilon M(\rd x)$, and $M_D(\rd x)=1_D\left(\frac{x}{|x|}\right)M(\rd x)$.  Thus \eqref{eq: M rv assymp} implies
$$
\int_{|x|<t}|x|^{2+\beta}\left(M^\beta_D+M_\epsilon^\beta\right)(\rd x) \sim \epsilon\sigma(\mathbb S^{d-1}) t^{\rho+2+\beta}\ell(t) \ \mbox{as} \ t\rightarrow\infty,
$$
and, as before, Theorem 5.3.11 in \cite{Meerschaert:Scheffler:2001} implies that
$$
V^\beta_D(t) + \epsilon V^\beta_{\mathbb S^{d-1}}(t) \sim \epsilon \sigma(\mathbb S^{d-1})\frac{2+\beta+\rho}{|\rho|}t^{\rho}\ell(t) \ \mbox{as}\ t\rightarrow\infty.
$$
Thus
$$
\lim_{t\rightarrow\infty} \frac{V^\beta_D(t)}{t^{\rho}\ell(t)} \le \lim_{\epsilon\downarrow0}\lim_{t\rightarrow\infty} \frac{V^\beta_D(t)+ \epsilon V^\beta_{\mathbb S^{d-1}}(t)}{t^{\rho}\ell(t)} = \lim_{\epsilon\downarrow0}\epsilon \sigma(\mathbb S^{d-1})\frac{2+\beta+\rho}{|\rho|}=0.
$$
Hence \eqref{eq: M* rv assymp} holds for all $D\in\mathfrak B(\mathbb S^{d-1})$ with $\sigma(\partial D)=0$. The proof of the other direction is similar.  \qed
\end{proof}

\section{Convergence of Sequences in $ID_0$}\label{sec: sequences}

In this section we extend Proposition \ref{prop: conv of M iff conv of M*} to weak convergence of sequences of distribution in $ID_0$ and $\ts$.

\begin{prop}\label{prop: conv iff conv of beta dual for ID}
1. Fix $\beta\in[0,2]$ and let $M_0,M_1,M_2,\dots\in\mathfrak M^\beta$.  If $X_n\sim ID_0(M_n,b_n)$ and $X_n'\sim ID_0(M_n^\beta,b_n)$ for $n=0,1,2,\dots$ then
\begin{align}
X_n\cond X_0 \mbox{ and}\ \lim_{N\rightarrow\infty}&\limsup_{n\rightarrow\infty}\int_{|x|>N}|x|^\beta M_n(\rd x)=0 \label{eq: converg a}
\end{align}
if and only if
\begin{align}\label{eq: converg b}
X_n'\cond X_0'  \mbox{ and}\ \lim_{N\rightarrow\infty}&\limsup_{n\rightarrow\infty}\int_{|x|>N}|x|^\beta M^\beta_n(\rd x)=0.
\end{align}
2.  Fix $\beta\in[\gamma,2]$ and let $R_0,R_1,R_2,\dots\in\mathfrak M^\beta$. If $Y_n\sim \ts(R_n,b_n)$ and $Y_n'\sim \ts(R_n^\beta,b_n)$ for $n=0,1,2,\dots$  then
\begin{align}
Y_n\cond Y_0 \mbox{ and}\ \lim_{N\rightarrow\infty}&\limsup_{n\rightarrow\infty}\int_{|x|>N}|x|^\beta R_n(\rd x)=0 \label{eq: converg a}
\end{align}
if and only if
\begin{align}\label{eq: converg b2}
Y_n'\cond Y_0'  \mbox{ and}\ \lim_{N\rightarrow\infty}&\limsup_{n\rightarrow\infty}\int_{|x|>N}|x|^\beta R^\beta_n(\rd x)=0. 
\end{align}
\end{prop}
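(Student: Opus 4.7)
The plan is to invoke Lemma~\ref{lemma: conv ID} for Part 1 and Lemma~\ref{lemma: conv TS} for Part 2 to convert each weak convergence statement into a concrete list of conditions on the L\'evy or Rosi\'nski measures together with convergence of the drifts, and then to translate these conditions between a measure $M \in \mathfrak M^\beta$ and its $\beta$-inversion $M^\beta$. The translations are provided by Proposition~\ref{prop: conv of M iff conv of M*} and by three change-of-variable identities that follow directly from the definition of $\beta$-inversion:
\begin{align*}
\int_{|y|<\epsilon}|y|^2\, M^\beta(\rd y) &= \int_{|x|>1/\epsilon}|x|^\beta\, M(\rd x), \\
\int_{|y|>N}|y|^\beta\, M^\beta(\rd y) &= \int_{|x|<1/N}|x|^2\, M(\rd x), \\
\int_{|y|>N}M^\beta(\rd y) &= \int_{|x|<1/N}|x|^{2+\beta}\, M(\rd x);
\end{align*}
by \eqref{eq: dual of dual for ID}, each also holds with the roles of $M$ and $M^\beta$ interchanged.

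For Part 1, Lemma~\ref{lemma: conv ID} unpacks $X_n \cond X_0$ into (i) $M_n \conv M_0$ on $\mathbb R^d_0$, (ii) $b_n \to b_0$, (iii) the Gaussian-compensation condition near zero, and (iv) the unweighted tail condition at infinity. Together with the $\beta$-weighted tail condition (v) appearing in \eqref{eq: converg a}, this gives five clauses, and the analogous decomposition of \eqref{eq: converg b} yields clauses (i')--(v') with $M_n^\beta$ in place of $M_n$. I then match them pairwise: (i)$\Leftrightarrow$(i') by Proposition~\ref{prop: conv of M iff conv of M*}(1); (ii) coincides with (ii'); the first identity gives (iii')$\Leftrightarrow$(v); the second gives (v')$\Leftrightarrow$(iii); clause (iv') follows from (iii) via the third identity together with $|x|^{2+\beta} \le |x|^2$ on $|x|<1$; and (iv) follows from (v) via the bound $\int_{|x|>N} M_n(\rd x) \le N^{-\beta} \int_{|x|>N} |x|^\beta\, M_n(\rd x)$ when $\beta>0$, and tautologically when $\beta=0$ since then (v) is already (iv). Combining these matchings yields \eqref{eq: converg a} $\Leftrightarrow$ \eqref{eq: converg b}.

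For Part 2 the argument is structurally the same, with Lemma~\ref{lemma: conv TS} replacing Lemma~\ref{lemma: conv ID}. The only difference is that the tail clause in Lemma~\ref{lemma: conv TS} carries the weight $|x|^\gamma$ rather than no weight. Since $\beta \in [\gamma,2]$, the $\gamma$-weighted tail for $R_n$ is dominated by clause (v); and by the $\beta$-inversion formula, the $\gamma$-weighted tail for $R_n^\beta$ becomes $\lim_N \limsup_n \int_{|x|<1/N} |x|^{2+\beta-\gamma}\, R_n(\rd x) = 0$, which is implied by the Gaussian-compensation condition on $R_n$ because $2+\beta-\gamma \ge 2$ gives $|x|^{2+\beta-\gamma} \le |x|^2$ on $|x|<1$. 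The main difficulty is purely bookkeeping: there are several interlocking clauses to match on each side, but with the three identities above in hand each matching step is a one-line check.
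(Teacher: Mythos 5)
Your proof is correct and follows exactly the route the paper takes: the paper's own proof simply declares the result an immediate consequence of Lemma~\ref{lemma: conv ID} (resp.\ Lemma~\ref{lemma: conv TS}), Proposition~\ref{prop: conv of M iff conv of M*}, and \eqref{eq: dual of dual for ID}, and your three change-of-variable identities and clause-by-clause matching are precisely the bookkeeping that makes "immediate" rigorous. All the individual matchings check out, including the domination arguments using $\beta\ge 0$ (resp.\ $\beta\ge\gamma$).
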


\begin{proof}
This result is an immediate consequence of Proposition \ref{prop: conv of M iff conv of M*}, Lemma \ref{lemma: conv ID} (or Lemma \ref{lemma: conv TS} in the case of Part 2), and \eqref{eq: dual of dual for ID}.\qed
\end{proof}

Combining this with Lemma \ref{lemma: conv ID} and Lemma \ref{lemma: conv TS} gives the following, the first part of which was previously given in Proposition 2.1 of \cite{Sato:2012}.

\begin{cor}\label{corr: conv iff conv of dual for ID}
1. If for $n=0,1,2,\dots$ we have $X_n\sim ID_0(M_n,b_n)$ and $X_n'\sim ID_0(M_n^0,b_n)$ then
\begin{eqnarray*}
X_n\cond X_0 \ \Longleftrightarrow X'_n\cond X_0'.
\end{eqnarray*}
2. If  for $n=0,1,2,\dots$ we have $Y_n\sim \ts(R_n,b_n)$ and $Y_n'\sim \ts(R_n^\gamma,b_n)$ then
\begin{eqnarray*}
Y_n\cond Y_0 \ \Longleftrightarrow Y_n'\cond Y_0'.
\end{eqnarray*}
\end{cor}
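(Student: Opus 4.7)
The plan is to apply Proposition \ref{prop: conv iff conv of beta dual for ID} with $\beta=0$ for Part 1 and with $\beta=\gamma$ for Part 2, and then to observe that the additional tail conditions appearing in that proposition are automatically satisfied whenever the respective weak convergences hold, by Lemma \ref{lemma: conv ID} and Lemma \ref{lemma: conv TS}.

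For Part 1, every L\'evy measure lies in $\mathfrak M^0$, so Proposition \ref{prop: conv iff conv of beta dual for ID}(1) with $\beta=0$ applies directly and states that
\[
\bigl(X_n\cond X_0 \ \text{and}\ \lim_{N\to\infty}\limsup_{n\to\infty}\int_{|x|>N}M_n(\rd x)=0\bigr)
\]
is equivalent to the analogous statement for the inverted sequence. Now Lemma \ref{lemma: conv ID} (applied with $\mu_n = ID_0(M_n,b_n)$) implies that $X_n\cond X_0$ \emph{already forces} $\lim_{N\to\infty}\limsup_{n\to\infty}\int_{|x|>N}M_n(\rd x)=0$, and the same reasoning applied to $(M_n^0,b_n)$ shows that $X_n'\cond X_0'$ forces the corresponding condition for $M_n^0$. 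Thus the tail conditions in Proposition \ref{prop: conv iff conv of beta dual for ID} are redundant in this setting, and the desired equivalence $X_n\cond X_0 \Longleftrightarrow X_n'\cond X_0'$ follows.

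For Part 2, the argument is completely parallel. Since Rosi\'nski measures always belong to $\mathfrak M^\gamma$, we may take $\beta=\gamma$ in Proposition \ref{prop: conv iff conv of beta dual for ID}(2), which yields the equivalence between $Y_n\cond Y_0$ together with $\lim_{N\to\infty}\limsup_{n\to\infty}\int_{|x|>N}|x|^\gamma R_n(\rd x)=0$, and $Y_n'\cond Y_0'$ together with the analogous tail condition on $R_n^\gamma$. Lemma \ref{lemma: conv TS} shows that each of these tail conditions is automatic once the corresponding weak convergence is assumed, so the two statements collapse to the bare equivalence $Y_n\cond Y_0 \Longleftrightarrow Y_n'\cond Y_0'$.

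There is essentially no obstacle here: the entire content has been packaged into Proposition \ref{prop: conv iff conv of beta dual for ID}, and the corollary is simply the observation that choosing $\beta=0$ (respectively $\beta=\gamma$) makes the auxiliary tail conditions coincide with those already built into the characterizations of weak convergence on $ID_0$ (respectively $\ts$) provided by Lemmas \ref{lemma: conv ID} and \ref{lemma: conv TS}. The only thing to be mildly careful about is that in Part 2 we need $R_n\in\mathfrak M^\gamma$ to invoke Proposition \ref{prop: conv iff conv of beta dual for ID}(2) with $\beta=\gamma$, but this holds by definition of Rosi\'nski measures of $\ts$ distributions.
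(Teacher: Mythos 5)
Your proof is correct and follows exactly the route the paper intends: the corollary is stated there as a direct consequence of Proposition \ref{prop: conv iff conv of beta dual for ID} combined with Lemmas \ref{lemma: conv ID} and \ref{lemma: conv TS}, and your write-up simply makes explicit why the tail conditions with $\beta=0$ (resp.\ $\beta=\gamma$) are already implied by the weak convergence characterizations. Nothing is missing.
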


Note that, by definition, $R\in\mathfrak M^\gamma$ and thus $R^\gamma$ is always defined. Assume that for $n=0,1,2,\dots$ we have $Y_n\sim\ts(R_n,b_n)$ such that $Y_n\cond Y_0$.  Since $\ts(R_n,b_n) = ID_0(M_n,b)$, where we get $M_n$ from $R_n$ by \eqref{eq:levy m}, Corollary \ref{corr: conv iff conv of dual for ID} implies that if for each $n=0,1,2,\dots$ we have  $X_n'\sim ID_0(M_n^0,b_n)$ and $Y_n'\sim \ts(R_n^\gamma,b_n)$ then $X_n'\cond X_0'$ and $Y_n'\cond Y_0'$.  However, the distributions of $X_n'$ and $Y_n'$ are, in general, very different.  In fact the distribution of $Y_n'$ is necessarily in $\ts$, while the distribution of $X_n'$ is, in general, not an element of this class.  This last fact follows from Theorem 4.6 in \cite{Sato:2012}.

We end this section by specializing Corollary \ref{corr: conv iff conv of dual for ID} to the case of convergence to an infinite variance stable distribution. First, recall that, for $\eta\in(0,2)$, an $\eta$-stable distribution is a distribution $\mu=ID_0(M,b)$ where 
\begin{eqnarray}\label{eq: levy measure of stable}
M(A) = \int_{\mathbb S^{d-1}}\int_0^\infty 1_A(ur)r^{-1-\eta}\rd r\sigma(\rd u), & A\in\mathfrak B(\mathbb R^d)
\end{eqnarray}
for some finite, non-zero Borel measure $\sigma$ on $\mathbb S^{d-1}$. We denote this distribution by $S_\eta(\sigma,b)$.  For details about stable distributions see \cite{Samorodnitsky:Taqqu:1994}. Note that for $\beta\in[0,\eta)$
\begin{eqnarray*}
M^\beta(A) 
= \int_{\mathbb S^{d-1}}\int_0^\infty1_A(ur)r^{-1-(2+\beta-\eta)}\rd r\sigma(\rd u), & A\in\mathfrak B(\mathbb R^d),
\end{eqnarray*}
Thus $ID_0(M^\beta,b)=S_{2+\beta-\eta}(\sigma,b)$. In \cite{Grabchak:2012a} it was shown that if $\eta\in(\gamma,2)$ then  $S_\eta(\sigma,b)=\ts(R,b)$ where $R(\rd x) = K_{\eta,\alpha,p}^{-1}M(\rd x)$ with $M(\rd x)$ as given by \eqref{eq: levy measure of stable} and $K_{\eta,\alpha,p}=\int_0^\infty t^{\eta-\alpha-1}e^{-t^p}\rd t$.  Thus for $\eta\in(\gamma,2)$ we have  $R^\gamma(\rd x) = K_{\eta,\alpha,p}^{-1}M^\gamma(\rd x)$. Note that, in this case, $\ts(R^\gamma,b)=S_{2+\gamma-\eta}(\sigma',b)$ where
\begin{eqnarray}\label{eq: sigma'}
\sigma'(\rd u)=\frac{K_{(2+\gamma-\eta),\alpha,p}}{K_{\eta,\alpha,p}}\sigma(\rd u).
\end{eqnarray}
These facts, combined with Corollary \ref{corr: conv iff conv of dual for ID} give the following result.

\begin{cor}\label{corr: conv to stable for dual for ID}
1. Fix $\eta\in(0,2)$. If  for $n=0,1,2,\dots$ we have $X_n\sim ID_0(M_n,b_n)$ and $X_n'\sim ID_0(M_n^0,b_n)$ then $X_n\cond S_\eta(\sigma,b)$ if and only if $X_n'\cond S_{2-\eta}(\sigma,b)$.\\
2. Fix $\eta\in(\gamma,2)$. If  for $n=0,1,2,\dots$ we have $Y_n\sim \ts(R_n,b_n)$ and $Y_n'\sim \ts(R_n^\gamma,b_n)$ then $Y_n\cond S_\eta(\sigma,b)$ if and only if $Y_n'\cond S_{2+\gamma-\eta}(\sigma',b)$, where $\sigma'$ is given by \eqref{eq: sigma'}.
\end{cor}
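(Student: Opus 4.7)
The statement is a direct corollary of Corollary~\ref{corr: conv iff conv of dual for ID}, and the paragraph immediately preceding it already performs the identifications that make the result almost mechanical. My plan is therefore to organize those pieces into two short arguments, one for each part.

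For part~1, I would start by writing the stable target as $S_\eta(\sigma,b)=ID_0(M,b)$ with $M$ the $\eta$-stable L\'evy measure in \eqref{eq: levy measure of stable}. The key computation, already displayed in the paragraph above the statement, is that applying the $0$-inversion in polar coordinates turns $r^{-1-\eta}\,\rd r$ into $r^{-1-(2-\eta)}\,\rd r$ while leaving $\sigma$ unchanged, so $ID_0(M^0,b)=S_{2-\eta}(\sigma,b)$. Because $\eta\in(0,2)$ gives $2-\eta\in(0,2)$, the right-hand side is a legitimate stable law. Now I apply part~1 of Corollary~\ref{corr: conv iff conv of dual for ID} with the limit taken to be this pair $(X_0,X_0')$: the equivalence $X_n\cond X_0\iff X_n'\cond X_0'$ reads exactly as the claimed equivalence $X_n\cond S_\eta(\sigma,b)\iff X_n'\cond S_{2-\eta}(\sigma,b)$.

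For part~2, I proceed in the same spirit but keep track of the normalizing constants $K_{\cdot,\alpha,p}$. If $\eta\in(\gamma,2)$, then (as recalled before the statement) $S_\eta(\sigma,b)=\ts(R,b)$ with $R=K_{\eta,\alpha,p}^{-1}M$; since the $\gamma$-inversion is linear in $R$, we get $R^\gamma=K_{\eta,\alpha,p}^{-1}M^\gamma$, and the same polar computation as in part~1 (now shifting the index by $2+\gamma-\eta$) identifies $\ts(R^\gamma,b)$ with $S_{2+\gamma-\eta}(\sigma',b)$, where $\sigma'$ is the rescaling of $\sigma$ given in \eqref{eq: sigma'}. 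I then verify $2+\gamma-\eta\in(\gamma,2)$, which is exactly the range needed so that the target is again a stable law with a valid $\ts$ representation. Applying part~2 of Corollary~\ref{corr: conv iff conv of dual for ID} with limit $(Y_0,Y_0')$ equal to this pair yields the claimed equivalence.

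There is no serious obstacle here; the only step requiring a small verification is that the $\beta$-inversion of the $\eta$-stable L\'evy measure really is the Lévy measure of an $(2+\beta-\eta)$-stable distribution with the same (or appropriately rescaled) spherical part. That is a direct computation from the definition, pushing $x\mapsto x/|x|^2$ through the polar decomposition and noting $|ru|=r$, so the angular measure $\sigma$ is preserved and the radial density transforms as stated. Once this identification is in hand in the form given in the paragraph above the corollary, both parts of the corollary reduce to invoking the relevant half of Corollary~\ref{corr: conv iff conv of dual for ID}.
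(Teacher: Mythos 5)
Your proposal is correct and matches the paper's own argument: the paper likewise identifies $ID_0(M^0,b)=S_{2-\eta}(\sigma,b)$ (and $\ts(R^\gamma,b)=S_{2+\gamma-\eta}(\sigma',b)$ with the $K_{\cdot,\alpha,p}$ rescaling of $\sigma$) via the polar-coordinate computation displayed just before the corollary, and then invokes Corollary~\ref{corr: conv iff conv of dual for ID}. No gaps.
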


\section{Long and Short Time Behavior For L\'evy Processes}\label{sec: long and short}

In this section we use the tools that we have developed to derive an equivalence between long and short time behavior of certain L\'evy processes.

\begin{thrm}\label{thrm: long iff short behavior}
1. Fix $\eta\in(0,2)$ and let $\{X_t:t\ge0\}$ and $\{X_t':t\ge0\}$ be L\'evy processes with $X_1 \sim ID_0(M,c)$ and $X_1'\sim ID_0(M^0,d)$. There exist functions $a_t$ and $\zeta_t$ such that
\begin{eqnarray}\label{eq: long time dual for ID}
a_t\left(X_t-\zeta_t\right)\cond S_\eta(\sigma,0) \  \mbox{as}\ t\rightarrow\infty
\end{eqnarray}
if and only if there exist functions $b_t$ and $\xi_t$ with
\begin{eqnarray}\label{eq: short time dual for ID}
b_t \left(X_t' -\xi_t\right) \cond S_{2-\eta}(\sigma,0) \  \mbox{as}\ t\downarrow0.
\end{eqnarray}
Moreover, when this holds we have $b_t \sim \left[(1/t)h^{-1}(1/t)\right]^{1/2}$ as $t\downarrow0$, where $h(t)$ is any invertible function with $h(t)\sim t^{-1}a_t^{-2}$ as $t\rightarrow\infty$.\\
2. Fix $\eta\in(\gamma,2)$ and let $\{Y_t:t\ge0\}$ and $\{Y_t':t\ge0\}$ 
be L\'evy processes with $Y_1 \sim \ts(R,c)$ and $Y_1'\sim \ts(R^\gamma,d)$. There exist functions $a_t$ and $\zeta_t$ such that
\begin{eqnarray}\label{eq: long time dual}
a_t\left(Y_t-\zeta_t\right)\cond  S_\eta(\sigma,0) \  \mbox{as}\ t\rightarrow\infty
\end{eqnarray}
if and only if there exist functions $b_t$ and $\xi_t$ with
\begin{eqnarray}\label{eq: short time dual}
b_t \left(Y_t'-\xi_t\right) \cond S_{2+\gamma-\eta}(\sigma,0)  \  \mbox{as}\ t\downarrow0.
\end{eqnarray}
Moreover, when this holds we have $b_t \sim \kappa^{-1/\eta}\left[(1/t)h_\gamma^{-1}(1/t)\right]^{1/(2+\gamma)}$ as $t\downarrow0$ , 
where $h_\gamma(t)$ is any invertable function with $h_\gamma(t)\sim t^{-1}a_t^{-2-\gamma}$ as $t\rightarrow\infty$ and $\kappa = K_{(2+\gamma-\eta),\alpha,p}/K_{\eta,\alpha,p}$.
\end{thrm}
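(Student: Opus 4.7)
The plan is to reduce the continuous-time convergence statements to sequential ones and apply Corollary~\ref{corr: conv to stable for dual for ID}. The computation underlying both parts is that $\beta$-inversion commutes with dilation as follows: writing $R_c$ for the pushforward of $R$ under $y\mapsto cy$, a direct substitution $y=x/c$ in the defining integral of $\beta$-inversion yields
\begin{eqnarray*}
(R_c)^\beta = c^{2+\beta}\,(R^\beta)_{1/c}, \qquad c>0,\ R\in\mathfrak M^\beta.
\end{eqnarray*}

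For Part~1, fix an arbitrary sequence $t_n\to\infty$. The variable $a_{t_n}(X_{t_n}-\zeta_{t_n})$ has L\'evy measure $M_n:=t_n M_{a_{t_n}}$ (and some drift $\tilde b_n$ that depends on $\zeta_{t_n}$), so by the identity above $M_n^0 = t_n a_{t_n}^2\,(M^0)_{1/a_{t_n}}$. Setting $s_n:=t_n a_{t_n}^2$ and $c_n:=1/a_{t_n}$, this equals $s_n(M^0)_{c_n}$, which is precisely the L\'evy measure of $c_n X'_{s_n}$. Choosing $\xi_{s_n}$ so that the drifts in the two $ID_0$ triplets match, Part~1 of Corollary~\ref{corr: conv to stable for dual for ID} gives
\begin{eqnarray*}
a_{t_n}(X_{t_n}-\zeta_{t_n})\cond S_\eta(\sigma,0) \Longleftrightarrow c_n(X'_{s_n}-\xi_{s_n})\cond S_{2-\eta}(\sigma,0).
\end{eqnarray*}
Since $\eta<2$ forces $s_n\to 0$ whenever $t_n\to\infty$, and since $h(t)=t^{-1}a_t^{-2}$ gives $s=1/h(t)$, hence $c=1/a_t=(t h(t))^{1/2}=((1/s)h^{-1}(1/s))^{1/2}$, the stated asymptotic for $b_s$ emerges. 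A standard sub-subsequence argument promotes the sequential equivalence to the continuous-time statement in the theorem.

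Part~2 proceeds identically in structure, with $\beta$ replaced by $\gamma$ and $ID_0$ by $\ts$. Using Rosi\'nski measures one finds $(t_n R_{a_{t_n}})^\gamma = t_n a_{t_n}^{2+\gamma}(R^\gamma)_{1/a_{t_n}}$, so $s_n:=t_n a_{t_n}^{2+\gamma}$ and $c_n:=1/a_{t_n}=((1/s_n)h_\gamma^{-1}(1/s_n))^{1/(2+\gamma)}$, and Part~2 of Corollary~\ref{corr: conv to stable for dual for ID} applies. The one extra twist is that the corollary delivers the limit $S_{2+\gamma-\eta}(\kappa\sigma,0)$ rather than $S_{2+\gamma-\eta}(\sigma,0)$; using the stable-scaling identity $\lambda X\eqd S_\nu(\lambda^\nu\sigma,0)$ for $X\sim S_\nu(\sigma,0)$ one absorbs the factor $\kappa$ into $b_s$ via an additional constant multiplicative factor, producing the displayed $\kappa$-prefactor.

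The main obstacle will be the careful bookkeeping of drifts: rescaling a variable with L\'evy triplet $(0,M,b)$ to one with measure $M_c$ does not simply send $b$ to $cb$, because the truncation function $\langle z,x\rangle/(1+|x|^2)$ appearing in \eqref{eq: inf div char func} introduces a correction depending on $c$ and $M$. Consequently, the shifts $\zeta_t$ and $\xi_s$ must be coordinated so that the drifts in the two sequential statements of Corollary~\ref{corr: conv to stable for dual for ID} genuinely align. A secondary technical point is verifying that every sequence $s_n\downarrow 0$ arises, at least along subsequences, from some $t_n\to\infty$ via $s_n=1/h(t_n)$ (respectively $1/h_\gamma(t_n)$), which is exactly where the invertibility assumption on $h$ (respectively $h_\gamma$) is used.
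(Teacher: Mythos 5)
Your proposal follows essentially the same route as the paper: the key identity $(M_c)^\beta = c^{2+\beta}\,(M^\beta)_{1/c}$ turns the L\'evy measure of $a_t(X_t-\zeta_t)$ into that of $a_t^{-1}\bigl(X'_{ta_t^2}-\xi\bigr)$, Corollary~\ref{corr: conv to stable for dual for ID} transfers the stable limit, and the time change $s=ta_t^2=1/h(t)$ produces the stated form of $b$; the paper works with the continuous parameter directly (after a Slutsky reduction to the canonical $a_t$, $b_t$) rather than via sequences and sub-subsequences, but that difference is cosmetic, and your explicit flagging of the drift bookkeeping and of surjectivity of $t\mapsto 1/h(t)$ matches what the paper handles implicitly. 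One caveat on Part~2 (whose proof the paper omits): the stable-scaling identity you invoke, applied with index $\nu=2+\gamma-\eta$, yields the prefactor $\kappa^{-1/(2+\gamma-\eta)}$ rather than the $\kappa^{-1/\eta}$ displayed in the theorem, so either that constant should be rechecked or the absorption of $\kappa$ into $b_t$ must be carried out differently than you describe.
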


From the standard theory of summation of iid random variables (see e.g.\ \cite{Feller:1971} or  \cite{Meerschaert:Scheffler:2001}) it follows that $a\in RV^\infty_{-1/\eta}$, thus the functions $t^{-1}a_t^{-2}$ and $t^{-1}a_t^{-2-\gamma}$ are regularly varying at infinity with a positive index of regular variation. This implies that they are asymptotically equivalent to an invertible function and hence $h$ and $h_\gamma$ are well defined. It is straightforward to see that in the first part $b\in RV^0_{-1/(2-\eta)}$ and in the second part $b\in RV^0_{-1/(2+\gamma-\eta)}$. 

\begin{proof}
We only prove the first part as the proof of the second part is similar. By Slutsky's Theorem it suffices to show that the result holds when $a_t=\left[th(t)\right]^{-1/2}$ and $b_t =\left[(1/t)h^{-1}(1/t)\right]^{1/2}$. Note that $b_t = 1/a_{h^{-1}(1/t)}$. Define 
$$
M'_t(B) = t\int_{\mathbb R^d}1_B(x a_t)M(\rd x) \ \ \mbox{and} \ \ M''_t(B) = t\int_{\mathbb R^d}1_B(x b_t)M^0(\rd x),\ \ B\in\mathfrak B(\mathbb R^d).
$$
These are, respectively, the L\'evy measures of $a_t\left(X_t-\zeta_t\right)$ and $b_t \left(X'_t-\xi_t\right)$. 

Assume that \eqref{eq: long time dual for ID} holds. Without loss of generality, we assume that $\zeta_t$ is such that $a_t\left(X_t-\zeta_t\right)\sim ID_0(M'_t,0)$ for every $t>0$. Since
\begin{eqnarray*}
\left(M'_t\right)^0(B) &=& \int_{\mathbb R^d}1_B\left(\frac{x}{|x|^2}\right) |x|^2M'_t(\rd x)\\
&=& ta_t^2\int_{\mathbb R^d}1_B\left(\frac{x}{|x|^2} a_t^{-1}\right) |x|^2M(\rd x)\\
&=& t a_t^2\int_{\mathbb R^d}1_B\left(x a_t^{-1}\right)M^0(\rd x),
\end{eqnarray*}
Corollary \ref{corr: conv to stable for dual for ID}  implies that
\begin{eqnarray*}
\frac{1}{a_t} \left(X'_{t a_t^2}-q_t\right) \cond  S_{2-\eta}(\sigma,0) \ \ \mbox{as} \ t\rightarrow\infty,
\end{eqnarray*}
where $q_t$ is such that $\frac{1}{a_t} \left(X'_{t a_t^2}-q_t\right) \sim ID_0\left((M'_t)^0,0\right)$. From here the result follows since
\begin{eqnarray*}
\lim_{t\rightarrow\infty}  \frac{1}{a_t} \left(X'_{t a_t^2}-q_t \right)&=& \lim_{t\downarrow0} \frac{1}{a_{1/t}} \left(X'_{t^{-1} a_{1/t}^2}-q_{1/t} \right) \\
&=& \lim_{t\downarrow0}  \frac{1}{a_{1/t}} \left(X'_{1/h(1/t)}-q_{1/t} \right)\\
&=&  \lim_{u\downarrow0} \frac{1}{a_{h^{-1}(1/u)}}\left( X'_{u} -q_{h^{-1}(1/u)} \right)= \lim_{u\downarrow0}b_u \left( X'_u-\xi_u \right),
\end{eqnarray*}
where the third line follows by the substitution $u=1/h(1/t)$ and $\xi_u=q_{h^{-1}(1/u)}$.

Conversely, assume that \eqref{eq: short time dual for ID} holds. Without loss of generality, we assume that $\xi_t$ is such that $b_t\left(X_t'-\xi_t\right)\sim ID_0\left((M''_t)^0,0\right)$ for every $t>0$. As before, since 
\begin{eqnarray*}
\left(M''_t\right)^0(B) = t b_t^2\int_{\mathbb R^d}1_B\left(x b_t^{-1}\right)M(\rd x),
\end{eqnarray*}
Corollary \ref{corr: conv to stable for dual for ID} implies that
\begin{eqnarray*}
\frac{1}{b_t} \left(X_{t b_t^2}-q'_t\right) \cond  S_\eta(\sigma,0) \ \ \mbox{as} \ t\downarrow0,
\end{eqnarray*}
where $q'_t$ is such that $\frac{1}{b_t} \left(X_{t b_t^2}-q'_t\right) \sim ID_0\left(M''_t,0\right)$. 
The result follows from the fact that
\begin{eqnarray*}
\lim_{t\downarrow0} \frac{1}{b_t} \left( X_{t b_t^2}-q_t\right) &=& \lim_{t\rightarrow\infty}  \frac{1}{b_{1/t}} \left(X_{t^{-1} b_{1/t}^2} -q_{1/t}\right)\\
&=& \lim_{t\rightarrow\infty} a_{h^{-1}(t)} \left( X_{h^{-1}(t)}-q_{1/t}\right) = \lim_{u\rightarrow\infty} a_u \left( X_u-\zeta_u\right),
\end{eqnarray*}
where $\zeta_u=q_{1/h(u)}$ and the  third equality follows by the substitution $u=h^{-1}(t)$.\qed
\end{proof}

We now derive self-contained conditions for a L\'evy process to converge in distribution to an infinite variance stable distribution when time approaches zero. It does not appear that this has been previously addressed for the multivariate case. However, in the univariate case, conditions in a slightly different form, are given in \cite{Maller:Mason:2008}.

\begin{thrm}\label{thrm: short time for Levy}
Fix $\eta\in(0,2)$ and let $\sigma$ be a finite, nonzero Borel measure on $\mathbb S^{d-1}$. Let $\{X_t:t\ge0\}$ be a L\'evy Process with $X_1\sim ID_0(M,b)$.\\
1. There exist functions $a_t>0$ and $\zeta_t$ such that
\begin{eqnarray}\label{eq: sort time for nongaus}
a_t \left(X_t-\zeta_t\right) \cond S_\eta(\sigma,0) \mbox{ as } t\downarrow0
\end{eqnarray}
if and only if $M\in RV^0_{-\eta}(\sigma)$.\\
2. If $\eta\in(\gamma,2)$ and $X_1\sim \ts(R,b)$ then there exist functions $a_t>0$ and $\zeta_t$ such that \eqref{eq: sort time for nongaus} holds  
if and only if $R\in RV^0_{-\eta}(\sigma)$. 
\end{thrm}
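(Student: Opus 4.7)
My plan is to reduce both parts to the classical characterization of long-time convergence of a L\'evy process to an infinite variance stable distribution (Rva\v{c}eva \cite{Rvaceva:1962}, or Chapter 8 of \cite{Meerschaert:Scheffler:2001}): for $\nu\in(0,2)$, some norming of $X_t$ converges to $S_\nu(\sigma,0)$ as $t\to\infty$ iff the L\'evy measure of $X_1$ lies in $RV^\infty_{-\nu}(\sigma)$. The bridge between short- and long-time is supplied by Theorem \ref{thrm: long iff short behavior}, and the bridge between regular variation at $0$ and at $\infty$ by Proposition \ref{prop: long and short reg var}.

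For Part 1, I apply Theorem \ref{thrm: long iff short behavior} Part 1 to turn \eqref{eq: sort time for nongaus} into the equivalent assertion that some norming of $X_t'$ (with $X_1'\sim ID_0(M^0,d)$) converges to $S_{2-\eta}(\sigma,0)$ as $t\to\infty$. Since $2-\eta\in(0,2)$, the classical characterization identifies this with $M^0\in RV^\infty_{-(2-\eta)}(\sigma)$, and Proposition \ref{prop: long and short reg var} with $\beta=0$ and $\rho=\eta-2\in(-2,0)$ yields $M^0\in RV^\infty_{-(2-\eta)}(\sigma)\Leftrightarrow M\in RV^0_{-\eta}(\sigma)$, closing the loop.

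For Part 2, the cleanest route is to observe that $X$ with $X_1\sim\ts(R,b)$ is itself a L\'evy process whose L\'evy measure $M$ is given by \eqref{eq:levy m}, so Part 1 reduces the theorem to the equivalence
\begin{eqnarray*}
M\in RV^0_{-\eta}(\sigma)\ \Longleftrightarrow\ R\in RV^0_{-\eta}(\sigma),
\end{eqnarray*}
understood modulo a positive multiplicative rescaling of $\sigma$. To prove this I substitute the polar decomposition of $R$ into \eqref{eq:levy m} and compute the inner radial integral $\int_{r/s}^\infty t^{-1-\alpha}e^{-t^p}\rd t$, using that this integral is asymptotic to $(r/s)^{-\alpha}/\alpha$ as $r/s\downarrow 0$ when $\alpha>0$, while for $\alpha<0$ the complementary integral $\int_0^{r/s}t^{-1-\alpha}e^{-t^p}\rd t\sim (r/s)^{-\alpha}/|\alpha|$ governs the deviation from the finite constant $K_{0,\alpha,p}$. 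A Karamata--Abel computation analogous to the one used inside the proof of Proposition \ref{prop: long and short reg var} via Theorem 5.3.11 of \cite{Meerschaert:Scheffler:2001} then shows that $R(|x|>r,x/|x|\in D)\sim r^{-\eta}L(r)\sigma(D)$ forces $M(|y|>r,y/|y|\in D)\sim c_{\eta,\alpha,p}\,r^{-\eta}L(r)\sigma(D)$ with a positive constant $c_{\eta,\alpha,p}$ depending only on $\eta,\alpha,p$; each step is reversible, delivering the claimed equivalence.

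The main obstacle will be this final Karamata passage between $R$ and $M$ in Part 2. One must dominate the tempering kernel $e^{-t^p}$ uniformly in the spherical variable, justify an interchange of integrations when some of the relevant integrals are only finite because $\eta>\gamma$, and verify that the angular factor of $M$ is genuinely proportional to that of $R$ so that both regular-variation statements refer to the same normalized spectral measure on $\mathbb S^{d-1}$. The dichotomy between $\alpha>0$ and $\alpha<0$ is mostly cosmetic once the overall strategy is fixed.
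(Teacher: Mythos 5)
Your Part 1 is correct and is essentially the paper's own argument: Theorem \ref{thrm: long iff short behavior} converts the short-time statement into long-time convergence of $X'$ to $S_{2-\eta}(\sigma,0)$, the classical domain-of-attraction theorem (\cite{Rvaceva:1962}, \cite{Meerschaert:Scheffler:2001}) characterizes this by $ID_0(M^0,0)\in RV^\infty_{-(2-\eta)}(\sigma)$, the result of \cite{Hult:Lindskog:2006} transfers regular variation from the distribution to its L\'evy measure $M^0$ (a step you should cite explicitly rather than fold into ``the classical characterization''), and Proposition \ref{prop: long and short reg var} with $\beta=0$, $\rho=\eta-2$ closes the loop.

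Part 2 is where you diverge from the paper, and where there is a genuine gap. The paper applies Theorem \ref{thrm: long iff short behavior} Part 2 to the process $Y'$ with $Y_1'\sim\ts(R^\gamma,d)$, reducing the problem to whether $\ts(R^\gamma,0)\in RV^\infty_{-(2+\gamma-\eta)}(\sigma)$; it then quotes \cite{Grabchak:2012a} for the equivalence between regular variation at infinity of a $p$-tempered $\alpha$-stable distribution and of its Rosi\'nski measure, and finishes with Proposition \ref{prop: long and short reg var} at $\beta=\gamma$. You instead reduce Part 2 to Part 1 plus the equivalence $M\in RV^0_{-\eta}(\sigma)\Leftrightarrow R\in RV^0_{-\eta}(\sigma)$ --- which is exactly the Corollary the paper \emph{derives from} the two parts of the theorem, so you must prove it independently. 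The direction $R\Rightarrow M$ is indeed the Abelian/Karamata computation you sketch. The problem is the converse: ``each step is reversible'' is not true as stated. Recovering the behavior of $R$ near zero from that of the Mellin convolution $M(|y|>r,\,y/|y|\in D)=\int 1_D(x/|x|)\,G_{\alpha,p}(r/|x|)\,R(\rd x)$ is a Tauberian (Mercerian) problem, not the inversion of an Abelian one; it requires a Mellin--Wiener type theorem (Chapters 4--5 of \cite{Bingham:Goldie:Teugels:1987}), including verification that the Mellin transform of the kernel $G_{\alpha,p}$ does not vanish on the relevant vertical line. That is precisely the nontrivial content of the regular-variation theorem of \cite{Grabchak:2012a} (stated there at infinity), and the paper's proof is arranged specifically so as to invoke that known result at infinity and transfer it to zero via Proposition \ref{prop: long and short reg var}, rather than re-prove its analogue at zero. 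Until you supply the Tauberian half, the ``only if'' implication of your Part 2 --- that $a_t(X_t-\zeta_t)\cond S_\eta(\sigma,0)$ forces $R\in RV^0_{-\eta}(\sigma)$ --- is unproven.
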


\begin{proof}
First we prove part 1. Theorem \ref{thrm: long iff short behavior} implies that \eqref{eq: sort time for nongaus} holds if and only if the distribution $ID_0(M^0,0)$ is in the domain of attraction of $S_{2-\eta}(\sigma,0)$.  By standard results (see e.g.\ \cite{Rvaceva:1962} or \cite{Meerschaert:Scheffler:2001}) this holds if and only if the distribution $ID(M^0,0)$ is an element of $RV_{-(2-\eta)}^\infty(\sigma)$. This holds if and only if $M^0\in RV_{-(2-\eta)}^\infty(\sigma)$ (see \cite{Hult:Lindskog:2006}). From here Part 1 follows by Proposition \ref{prop: long and short reg var}. The proof of the second part is similar, this time we must find necessary and sufficient conditions for the distribution $\ts(R^\gamma,0)$ to be an element of $RV_{-(2+\gamma-\eta)}^\infty(\sigma)$.  This holds if and only if $R^\gamma\in RV_{-(2+\gamma-\eta)}^\infty(\sigma)$ (see \cite{Grabchak:2012a}). From here part 2 follows by Proposition \ref{prop: long and short reg var}. \qed
\end{proof}

Combining the two parts of this theorem gives the following short time analogue of Theorem 5 in \cite{Grabchak:2012a}.

\begin{cor}
Let $R$ be the Rosi\'nski measure of a $p$-tempered $\alpha$-stable distribution, and let $M$ be the L\'evy measure of this distribution (that is, we get $M$ from $R$ by \eqref{eq:levy m}). If $\sigma$ is a finite, nonzero Borel measure on $\mathbb S^{d-1}$ and $\eta\in(\gamma,2)$ then
$$
R\in RV^0_{-\eta}(\sigma) \Longleftrightarrow M\in RV^0_{-\eta}(\sigma).
$$
\end{cor}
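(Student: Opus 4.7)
The plan is to observe that the corollary follows immediately from applying both parts of Theorem \ref{thrm: short time for Levy} to the same L\'evy process. Specifically, let $\{X_t:t\ge 0\}$ be the L\'evy process with $X_1\sim TS^p_\alpha(R,b)=ID_0(M,b)$, where $M$ is recovered from $R$ via \eqref{eq:levy m}. Part 1 of the theorem then characterizes the existence of deterministic sequences $a_t>0$, $\zeta_t$ making $a_t(X_t-\zeta_t)\cond S_\eta(\sigma,0)$ as $t\downarrow 0$ by the condition $M\in RV^0_{-\eta}(\sigma)$, while Part 2 characterizes the existence of such sequences producing exactly the same weak limit $S_\eta(\sigma,0)$ by the condition $R\in RV^0_{-\eta}(\sigma)$.

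Since both conditions are necessary and sufficient for exactly the same convergence statement, they must be equivalent to one another. This proves the biconditional. One should briefly note that the spherical directional measure $\sigma$ on either side of the equivalence is only determined up to a positive multiplicative constant (as pointed out after Definition \ref{defn: reg var for measures at infty}), so the ``same'' $\sigma$ appearing on both sides is unambiguous: both conditions fix the same normalized directional part, determined by the direction of the limiting stable distribution.

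The main (and essentially only) content of the proof is invoking Theorem \ref{thrm: short time for Levy}. There is no obstacle to overcome beyond verifying that the hypothesis $\eta\in(\gamma,2)$ ensures both parts of the theorem apply simultaneously to the same process, which is exactly the range stated in Part 2 (and is contained in the range $\eta\in(0,2)$ required by Part 1). Hence the corollary is immediate.
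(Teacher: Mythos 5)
Your proof is correct and is exactly the paper's argument: the author introduces the corollary with ``Combining the two parts of this theorem gives the following,'' i.e.\ both parts of Theorem~\ref{thrm: short time for Levy} are applied to the same process $X_1\sim\ts(R,b)=ID_0(M,b)$, so the two regular-variation conditions are each equivalent to the same short-time convergence statement and hence to each other. Your added remark about $\sigma$ being determined only up to a positive constant (and the definition of $RV^a_\rho(\sigma)$ being scale-invariant in $\sigma$) is a sensible clarification that the paper leaves implicit.
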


\begin{remark}\label{remark: case alpha=0}
We now turn to the case $\alpha=0$ and $p>0$.  For a measure $R$ to be the Rosi\'nski measure of some $p$-tempered $0$-stable distribution, $R$ must satisfy the condition that 
$R\in\mathfrak M^0 \mbox{ and } \int_{|x|>1} \log|x|R(\rd x)<\infty$ (see \cite{Grabchak:2012a}). Let $\mathfrak M^\lgg$ be the class of measures that satisfy this condition. The natural definition of the inversion of $R\in\mathfrak M^\lgg$ appears to be to let $R^\lgg(\{0\})=0$ and
$$
R^\lgg(A) = \int_{\mathbb R^d}1_A\left(\frac{x}{|x|^2}\right)|x|^2(1+|\log|x||)^{\kappa(|x|)} R(\rd x), \ \ A\in\mathfrak B(\mathbb R^d),
$$
where $\kappa(t) = 1$ if $t\ge1$ and $\kappa(t)=-1$ if $t\in[0,1)$. It is not difficult to see that $R^\lgg\in\mathfrak M^\lgg$ and $(R^\lgg)^\lgg=R$. All of the results in Sections \ref{sec: Duals} and \ref{sec: sequences} have a version for this case. However, we are not able to show the corresponding results from Section \ref{sec: long and short}.
\end{remark}

\section*{Acknowledgments}
This work was supported, in part, by funds provided by the University of North Carolina at Charlotte.  Much of the research for this paper was done while the author was a PhD student working with Professor Gennady Samorodnitsky. Professor Samorodnitsky's comments and support are gratefully acknowledged. The author wishes to thank the two anonymous referees whose detailed comments let to a great improvement in the presentation of this paper.

\end{document}